\documentclass[11pt]{amsart}

\usepackage{amsmath}
\usepackage{amssymb}
\usepackage{graphicx}

\textwidth=33truepc
\textheight=50truepc

\hoffset = - 0.5in

\paperheight=257mm
\paperwidth=182mm

\newtheorem{theorem}{Theorem}[section]
\newtheorem{corollary}[theorem]{Corollary}

\newtheorem{proposition}[theorem]{Proposition}
\theoremstyle {definition}

\numberwithin{equation}{section}

\renewcommand{\geq}{\geqslant}
\renewcommand{\leq}{\leqslant}

\title{Uniformly nonsquare Banach spaces have the fixed point property 1}

\author[Tim Dalby]{Tim Dalby}

\date{\today}

\keywords{fixed point property, uniformly nonsquare, modulus of nearly uniform smoothness}

\subjclass[2010]{46B10, 47H09, 47H10}

\email{tim\_dalby@bigpond.com}

\begin{document}
\parindent = 0pt
\parskip = 8pt

\begin{abstract}

Another proof that uniformly nonsquare Banach spaces have the fixed point property is presented.

\end{abstract}

\maketitle

\section{Introduction}

In [3] Garc\'{i}a-Falset, Llorens-Fuster and Mazcu\~{n}an-Navarroa were the first to show that a uniformly nonsquare Banach space has the Fixed Point Property, FPP.  The proof is not direct as it uses the following chain of reasoning.  The notation and definitions are explained in the next section.

A Banach space, $X$, is uniformly nonsquare if and only if  $\rho'_X(0) < 1$, see [6].  This then implies that $X$ is reflexive and  $\Gamma_X'(0) < 1.$   It is then shown that this implies $RW(a, X) < 1 + a$ for some $a > 0.$  Finally, it was shown that $R(a, X) \leq RW(a, X)$ which allowed a result of Dom\'{i}nguez-Benavides, [1], to be used to obtain the fixed point property.

In summary, the proof goes along the following path. The reverse implications are left out for ease of reading.

$X$ is uniformly nonsquare $\Rightarrow \rho'_X(0) < 1 \Rightarrow X \mbox{ is reflexive and } \Gamma_X'(0) < 1 \Rightarrow \mbox {there exist } a > 0 \mbox { such that } RW(a, X) < 1 + a \Rightarrow R(a, X) < 1 + a \Rightarrow X$ has the FPP.

The proof really shows the more general result that if $X$ is reflexive and $\Gamma_X'(0) < 1$ then $X$ has the FPP with the main result coming as a corollary.

The proof in this paper goes down a more direct path.  The idea is highlight the connection between the Fixed Point Property and the modulus of nearly uniform smoothness.

There will be a further two more papers on this topic presenting two more proofs that the property of uniformly nonsquare implies the FPP.  These proofs will be from slightly different perspectives with intention of highlighting different aspects of Banach space geometry.

At this point, it is important to note that there is now a very direct proof of uniformly nonsquare implying the FPP curtesy of Dowling, Randrianantoanina and Turett, [2].

Now for the definitions and notation.

\section{Definitions}

The modulus of uniform smoothness is 
\[\rho_X(t) = \sup \left\{ \frac{\displaystyle \| x + ty \| + \| x - ty \|}{\displaystyle 2} - 1: x, y \in B_X  \right\}\]
 where $t \geq 0$ and 
\[ \rho'_X(0) = \lim_{t \rightarrow 0^+} \frac{\rho_X(t)}{t}. \]

The modulus of nearly uniform smoothness is
\[ \Gamma_X(t) = \sup \left\{ \inf_{n > 1} \left( \frac{\| x_1 + tx_n \| + \| x_1 - tx_n \|}{2} - 1 \right) \right\}, \]
where the supremum is taken over all basic sequences $(x_n)$ in $B_X$.

If $X$ is reflexive then
\[ \Gamma_X(t) = \sup \left\{ \inf_{n>1} \left( \frac{\| x_1 + tx_n \| + \| x_1 - tx_n \|}{2} - 1 \right): (x_n) \mbox{ in } B_X, x_n \rightharpoonup 0 \right\}. \]
Also \[ \Gamma_X'(0) = \lim_{t \rightarrow 0 ^+} \frac{\Gamma_X(t)}{t}. \]

It is clear from the definitions that $\Gamma_X(t) \leq \rho_X(t) \mbox{ for all } t \geq 0.$

So $\Gamma_X'(0) \leq \rho'_X(0).$

In [3] it was shown that $\Gamma_X'(0) < 1 $ is equivalent to there exists $s > 0$ such that $\Gamma(s ) < s.$ \qquad  \dag

{\bf Remark} It can be shown that this last condition on $\Gamma(s)$ is equivalent to \newline $\Gamma(s) < s \mbox{ for all } s > 0.$

\section{Result}

\begin{proposition}

Let $X$ be a reflexive Banach space where $\Gamma_X'(0) < 1$ then $X$ has the FPP.

\end{proposition}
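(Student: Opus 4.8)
The plan is to argue by contradiction, using the geometric condition in its most convenient form. First I would record the reduction coming from $\dag$ and the Remark: the hypothesis $\Gamma_X'(0)<1$ is equivalent to the existence of some $s>0$ with $\Gamma_X(s)<s$, and in fact to $\Gamma_X(s)<s$ for every $s>0$. So I may fix any convenient $s\in(0,1]$ and assume $\Gamma_X(s)<s$.

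Next I set up the standard minimal-set machinery. Suppose $X$ fails the FPP. Since $X$ is reflexive, bounded closed convex sets are weakly compact, so there is a nonempty weakly compact convex set $C$ and a nonexpansive $T\colon C\to C$ with no fixed point. A Zorn's lemma argument produces a minimal nonempty weakly compact convex $T$-invariant set $K$, which is not a singleton; rescaling, I normalise $\mathrm{diam}(K)=1$. On $K$ one constructs an approximate fixed point sequence $(x_n)$, i.e.\ $\|x_n-Tx_n\|\to0$; passing to a subsequence and translating, I may assume $x_n\rightharpoonup 0$, so that $0\in K$ and $K\subseteq B_X$. The key rigidity I will lean on is the Goebel--Karlovitz lemma: for every $z\in K$, $\lim_n\|x_n-z\|=\mathrm{diam}(K)=1$. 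Taking $z=0$ gives $\|x_n\|\to1$, and after a further extraction $(x_n)$ is a weakly null sequence admissible in the definition of $\Gamma_X$.

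The heart of the matter is to feed this configuration into the modulus. For any anchor $v\in K$ (so $\|v\|\le1$) the weakly null sequence $(x_n)$ gives, for infinitely many $n$,
\[ \frac{\|v+s x_n\|+\|v-s x_n\|}{2}\le 1+\Gamma_X(s)+\varepsilon. \]
The natural way to exploit this is through the convex combination $p_n=\frac{v+s x_n}{1+s}\in K$, for which a direct computation gives $\|p_n-x_n\|=\frac{1}{1+s}\|v-x_n\|\to\frac{1}{1+s}$. Combining the modulus bound with the Goebel--Karlovitz constancy and the nonexpansiveness of $T$, I aim to show that the asymptotic distance from a suitable point of $K$ to the sequence is forced below $1$, contradicting $\lim_n\|x_n-z\|=1$; equivalently, the construction should yield the inequality $1+s\le 1+\Gamma_X(s)$, i.e.\ $s\le\Gamma_X(s)$, contradicting the choice of $s$.

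The main obstacle is the asymmetry between the two terms of the modulus. The combination $v+s x_n$ corresponds to the genuine point $p_n\in K$, whereas $v-s x_n$ is an extrapolation that leaves $K$, so the Goebel--Karlovitz lemma does not control it directly. I expect to resolve this by supplying the missing symmetry through the map itself: using $\|x_n-Tx_n\|\to0$ and the nonexpansiveness of $T$ to manufacture, from $(x_n)$, an auxiliary approximate fixed point sequence (or a second family of convex combinations) that realises the $v-s x_n$ side inside $K$. A secondary technical point is that the estimate $\|p_n-x_n\|\to\frac{1}{1+s}$ only compares same-index terms, so I will need to pass from this to a genuine bound against a single fixed point of $K$ --- via convex combinations of the $x_n$ that remain approximate fixed point sequences, or a free-ultrafilter/asymptotic-centre argument --- before the Goebel--Karlovitz equality can be contradicted.
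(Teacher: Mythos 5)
Your proposal correctly identifies the shape of the argument (contradiction via a minimal invariant set and an approximate fixed point sequence fed into $\Gamma_X$), and you have put your finger on exactly the right obstacle --- the asymmetry between the $v+sx_n$ term, which lives in $K$, and the $v-sx_n$ term, which does not. But the proposal has two genuine gaps. First, the configuration you start from is too weak. The basic Goebel--Karlovitz setup gives only $\operatorname{diam}(K)=1$, $x_n\rightharpoonup 0$ and $\|x_n\|\to 1$; in that setting the modulus bound $\tfrac12(\|v+sx_n\|+\|v-sx_n\|)\le 1+\Gamma_X(s)$ is barely better than the triangle inequality $\|v\pm sx_n\|\le 1+s$, and nothing forces both terms up to $1+s$, so no contradiction with $\Gamma_X(s)<s$ can emerge. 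The paper instead uses the refined sequence coming from Lin's work and the excursion to $\ell_\infty(X)/c_0(X)$: a sequence $y_n\rightharpoonup y$ with $\lim_n\|y_n\|$ existing (and needing to be close to $1$), but with the \emph{small} separation $D[(y_n)]\le t=s/(1+s)$ and $\|y\|\le 1-t$. It is the smallness of $D[(y_n)]$ relative to the size of $\lim\|y_n\|$, after rescaling the difference sequence $w_{n_k}-w_{n_{k+1}}$ and the weak limit $w$ back into $B_X$, that makes $\Gamma_X(s)<s$ bite. Your plan has no analogue of this rescaling, and without the quantitative properties of the refined sequence there is nothing to rescale.

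Second, the asymmetry is not resolved by your suggestion of manufacturing, via $T$ and $\|x_n-Tx_n\|\to 0$, an auxiliary sequence realising $v-sx_n$ inside $K$; it is not clear such a sequence exists, and the paper does not use the map $T$ at all at this stage. The actual resolution is purely Banach-space-theoretic: one passes to the difference sequence $(w_n-w_m)$ (still weakly null, with norms controlled by $D[(y_n)]$) and shows that $\lim_n\|w_n+w\|$ is dominated by \emph{both} $\liminf_n\liminf_m\|(w_n-w_m)+w\|$ and $\liminf_n\liminf_m\|(w_n-w_m)-w\|$. The $+w$ bound is weak lower semicontinuity of the norm; the $-w$ bound is obtained by choosing norming functionals $w_m^*\in S_{X^*}$ with $w_m^*(w_m+w)=\|w_m+w\|$, extracting a weak-$*$ limit (using reflexivity), and evaluating $-w_m^*$ at $(w_n-w_m)-w$. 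This functional-analytic trick is the key idea your proposal is missing, and without it (or a genuine substitute) the argument does not close.
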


\begin{proof} 

Assume $X$ is a reflexive Banach space with $\Gamma_X'(0) < 1$.  From \dag \, we can find an $s > 0$ such that $\Gamma(s ) < s.$  

Let $t =\frac{\displaystyle s}{\displaystyle 1 + s}$ then $0 < t < 1.$

The usual set up is to assume $X$ does not have the FPP and arrive at a contradiction.  Using results from Goebel [4], Karlovitz [5] and Lin [7] plus an excursion to \newline $\tilde{X} = l_\infty(X)/c_0(X)$ and then back to $X$ it can be shown that there exists a sequence with the following properties.
\[ y_n \rightharpoonup y, \lim_{n \rightarrow \infty} \| y_n \|\mbox{ exists, } D[(y_n)] = \limsup_{n \rightarrow \infty} \limsup_{m \rightarrow \infty} \| y_n - y_m \| \leq t  \mbox{ and } \| y \| \leq 1 - t. \]

To obtain the required contradiction, we need to show that $\lim_{n \rightarrow \infty} \| y_n \|$ is uniformly away from 1.

Let $w_n = y_n - y \mbox{ and } w = y \mbox{ then } w_n \rightharpoonup 0.$

Since $y_n = w_n + w$ it is necessary to investigate upper bounds on $\| w_n + w \|.$  This is done via $D[(w_n)] = D[(y_n)] \leq t.$

It turns out that $\lim_{n \rightarrow\infty}\| w_n + w \|$ bounded above by both 
\[ \liminf_{n \rightarrow \infty}\liminf_{m \rightarrow \infty} \| (w_n - w_m) + w \| \mbox{ and } \liminf_{n \rightarrow \infty}\liminf_{m \rightarrow \infty} \| (w_n - w_m) - w \| \]
as shown below.

Using lemma 3.2 in [3] there exists a subsequence $(w_{n_k})$ such that
\[ \limsup_{k \rightarrow \infty} \| w_{n_k} - w_{n_{k + 1}} \| \leq D[(w_n)] = D[(y_n)] \leq t \]
and
\[ \liminf_{k \rightarrow \infty} \| w_{n_k} - w_{n_{k + 1}}+w \| \geq \liminf_{n \rightarrow \infty}\liminf_{m \rightarrow \infty} \| (w_n - w_m) + w \|  \]
and 
\[ \liminf_{k \rightarrow \infty} \| w_{n_k} - w_{n_{k + 1}} - w \| \geq \liminf_{n \rightarrow \infty}\liminf_{m \rightarrow \infty} \| (w_n - w_m) - w \|. \]

Using the weak lower semicontinuity of the norm, 
\[ \liminf_{m \rightarrow \infty} \| (w_n - w_m) + w \| \geq \| w_n + w \| \mbox{ for all } n. \]

So $\liminf_{n \rightarrow \infty} \liminf_{m \rightarrow \infty} \|( w_n - w_m) + w \| \geq \liminf_{n \rightarrow \infty} \| w_n + w \|.$

To obtain a similar inequality for $\liminf_{n \rightarrow \infty}\liminf_{m \rightarrow \infty} \| (w_n - w_m) - w \|$ take $w_n^* \in S_{X^*} \mbox{ where } 
w^*_n(w_n + w) = \| w_n + w \|.$  Because of the reflexivity of $X, B_{X^*}$ is w*-sequentially compact.  So, without loss of generality, we may assume that $w_n^* \stackrel{*}{\rightharpoonup} w^* \mbox { where } \| w^* \| \leq 1.$

Now
\begin{align*}
\liminf_{m \rightarrow \infty} \| (w_n - w_m) - w \| & \geq \liminf_{m \rightarrow \infty} (-w_m^*)((w_n - w_m) - w)\\
& = \liminf_{m \rightarrow \infty} w_m^*(w_m + w) - w^*(w_n)\\
& = \liminf_{m \rightarrow \infty} \|w_m + w\| - w^*(w_n).
\end{align*}

Therefore $\liminf_{n \rightarrow \infty} \liminf_{m \rightarrow \infty} \| (w_n - w_m) - w \| \geq \liminf_{m \rightarrow \infty} \| w_m + n \|.$

Thus 
\begin{align*}
\lim_{n \rightarrow\infty} \| y_n \| & = \lim_{n \rightarrow \infty} \| w_n + w \|\\
& \leq \left ( \liminf_{k \rightarrow\infty} \| (w_{n_k} - w_{n_{k + 1}}) + w \| \right ) \wedge \left (\liminf_{k \rightarrow\infty} \| (w_{n_k} - w_{n_{k + 1}}) - w \| \right )\\
& \leq \frac{\liminf_{k \rightarrow\infty} \| (w_{n_k} - w_{n_{k + 1}}) + w \| + \liminf_{k \rightarrow \infty} \| (w_{n_k} - w_{n_{k + 1}}) - w \|}{2}.
\end{align*}

This expression is starting to look like that used in the definition of $\Gamma(s)$ since $(w_{n_k} - w_{n_{k + 1}})$ is weak null.

Next is some scaling so that $(w_{n_k} - w_{n_{k + 1}})$ becomes a weak null sequence in $B_X$ and $w$ is scaled so it too is in $B_X.$

Choose $\alpha > 0$ such that  $\alpha < \frac{\displaystyle s (s - \Gamma(s) )}{ \displaystyle ( \Gamma(s) + s + 2 )( 1 + s )}. \qquad \dag\dag$

This unusual fraction is chosen so that three inequalities a little further on, work out just right.

This means
\begin{align*}
\alpha(1 + s) & < \frac{s (s - \Gamma(s) )}{\Gamma(s) + s + 2}\\
& = \frac{2s (1 + s)}{\Gamma(s) + s + 2} - s.
\end{align*}

Which leads to
\[ s + \alpha(1 + s) < \frac{2s (1 + s)}{\Gamma(s) + s + 2} \]

and
\[ \frac{s + \alpha(1 + s)}{s (1 + s)} \left ( \frac{\Gamma(s) + s + 2}{2} \right ) < 1. \]

Let 
\[ v_1 = \frac{ \displaystyle w}{ \displaystyle \frac{ \displaystyle s + \alpha(1 + s)}{ \displaystyle s (1 + s)}} \]

and
\[v_k = \frac{ \displaystyle w_{n_k} - w_{n_{k + 1}}}{ \displaystyle \frac{ \displaystyle s + \alpha(1 + s)}{ \displaystyle 1 + s}} \mbox{ for } k > 1. \]

Then $v_k \rightharpoonup 0 \mbox{ and for large enough } k,  \| w_{n_k} - w_{n_{k + 1}} \| \leq t + \alpha$.

So for large $k$,

\begin{align*}
\| v_k \|  & \leq \frac{ \displaystyle t + \alpha}{ \displaystyle \frac{s + \alpha(1 + s)}{ \displaystyle 1 + s}}\\
& = \frac{s + \alpha(1 + s)}{s + \alpha(1 + s)}\\
& = 1.
\end{align*}
Also 
\begin{align*} 
\| v_1 \| & \leq \frac{ \displaystyle 1 - t}{ \displaystyle \frac{s + \alpha(1 + s)}{ \displaystyle s (1 + s)}}\\
& = \frac{s (1 + s)}{s + \alpha(1 + s)}\left ( 1 - \frac{s}{1 + s} \right )\\
& = \frac{s}{s + \alpha(1 + s)}\\
& \leq 1. 
\end{align*}

Now back to the story about $\lim_{n \rightarrow\infty}\| w_n + w \|.$

\[ \frac{ \displaystyle 1}{ \displaystyle \frac{s + \alpha(1 + s)}{ \displaystyle s (1 + s)}}\lim_{n \rightarrow\infty} \| w_n + w \| \leq \frac{\liminf_{k \rightarrow\infty}\| sv_k + v_1 \| + \liminf_{k \rightarrow\infty}\| sv_k - v_1 \|}{2}. \]

But
\[ \inf_{k > 1}\left ( \frac{\displaystyle \| sv_k + v_1 \| + \|sv_k - v_1 \|}{\displaystyle 2} - 1 \right ) \leq \Gamma(s). \]

So there exists $k_0$ such that
\[ \frac{\displaystyle \| sv_{k_0} + v_1 \| + \| sv_{k_0} - v_1 \|}{\displaystyle 2} - 1 \leq \Gamma(s) + \frac{\displaystyle s - \Gamma(s)}{\displaystyle 2} = \frac{\displaystyle \Gamma(s) + s}{\displaystyle 2}. \]

If the little bit added onto $\Gamma(s), \frac{\displaystyle s - \Gamma(s)}{\displaystyle 2}$, is different then so will be the expression in inequality \dag\dag.

Now
\begin{align*}
\frac{\displaystyle \liminf_{k \rightarrow\infty} \| sv_k + v_1 \| + \liminf_{k \rightarrow\infty} \| sv_k - v_1 \|}{\displaystyle 2} - 1 & \leq \frac{\displaystyle \| sv_{k_0} + v_1 \| + \| sv_{k_0} - v_1 \|}{\displaystyle 2} - 1\\
& \leq \frac{\displaystyle \Gamma(s) + s}{\displaystyle 2}.
\end{align*}

Therefore

\begin{align*}
\lim_{n \rightarrow\infty}\| y_n \| & = \liminf_{n \rightarrow\infty} \| w_n + w \|\\
& \leq \frac{s + \alpha(1 + s)}{s(1 + s)} \times \frac{\Gamma(s) + s + 2}{2}\\
& < 1.
\end{align*}

A contradiction, so $X$ has the FPP.

\end{proof}

\begin{corollary}

If $X$ is a Banach space that is uniformly nonsquare then $X$ has the FPP.

\end{corollary}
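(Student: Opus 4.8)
The plan is to deduce the corollary directly from the Proposition, which already delivers the fixed point property for any reflexive space with $\Gamma_X'(0) < 1$. The entire task therefore reduces to verifying these two hypotheses for a uniformly nonsquare $X$, and no fresh argument about non-expansive mappings is needed.

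First I would invoke the characterisation recorded in the introduction: by [6], $X$ is uniformly nonsquare if and only if $\rho'_X(0) < 1$. From the assumed uniform nonsquareness I thus obtain $\rho'_X(0) < 1$ immediately. Next I would settle reflexivity. Uniformly nonsquare spaces are superreflexive, hence reflexive; equivalently, as noted after the statement in [6], the condition $\rho'_X(0) < 1$ already forces $X$ to be reflexive. This is precisely the reflexivity that the Proposition requires, both to guarantee that $B_{X^*}$ is weak$^*$-sequentially compact and to license the weak-null reformulation of $\Gamma_X(t)$ used throughout its proof.

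Finally I would compare the two moduli. The inequality $\Gamma_X(t) \leq \rho_X(t)$ for all $t \geq 0$ is immediate from the two definitions, since the infimum over $n$ inside $\Gamma_X$ can only decrease the quantity that is freely optimised in $\rho_X$. Dividing by $t$ and letting $t \to 0^+$ yields $\Gamma_X'(0) \leq \rho'_X(0) < 1$. Both hypotheses of the Proposition are now in force, so $X$ has the FPP.

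The argument is routine once these ingredients are assembled; the only genuinely nontrivial inputs are external, namely the equivalence in [6] and the reflexivity of uniformly nonsquare spaces. The point to be careful about is that reflexivity is indispensable rather than cosmetic: it is what allows the passage to a weak-null sequence $w_n = y_n - y$ and the extraction of a weak$^*$-convergent sequence of norming functionals $w_n^*$, both of which are used crucially in the proof of the Proposition.
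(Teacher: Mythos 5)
Your proposal is correct and follows exactly the route the paper intends: the introduction states the chain ``uniformly nonsquare $\Rightarrow \rho'_X(0) < 1 \Rightarrow X$ reflexive and $\Gamma_X'(0) < 1$'' (via [6] and the elementary inequality $\Gamma_X(t) \leq \rho_X(t)$), and the corollary is obtained by feeding these two hypotheses into the Proposition. Nothing is missing, and your closing remark on why reflexivity is genuinely needed in the Proposition's proof is accurate.
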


{\bf Remark}

 \begin{enumerate}

\item [1.]   If $X$ is uniformly nonsquare then there exists $s > 0$ such that $\rho(s)  < s.$  Can a proof be devised where this inequality is used instead of there exists $s > 0$ such that $\Gamma(s) < s?$

\item [2.] Following on from the Remark in Section 2, since any $s > 0$ can be used, does this allow a similar result but where the Banach-Mazur distance is used.  That is, if $X$ is a uniformly nonsquare Banach space and $Y$ is another Banach space where the Banach-Mazur distance, $d(X,Y) < \beta$ then $Y$ has the FPP.  Here the $\beta$ is yet to be determined.

\end{enumerate}


\begin{thebibliography}{99}

\bibitem {1} T. Dom\'{i}nguez-Benavides, {\it A geometric coefficient implying the fixed point property and stability results}, Houston J. Math. {\bf 22} (1996), 835-849.

\bibitem {2} P. N. Dowling, B. Randrianantoanina and B. Turett, {\it The fixed point property via dual space properties}, J. Funct. Anal. {\bf 255} (2008), 768-775.

\bibitem {3} J. Garc\'{i}a-Falset, E. Llorens-Fuster and E. M. Mazcu\~{n}an-Navarroa, {\it Uniformly nonsquare Banach spaces have the fixed point property for nonexpansive mappings}, J. Funct. Anal. {\bf 233} (2006), 494-534.

\bibitem {4} K. Goebel, {\it On the structure of minimal invariant sets for nonexpansive mappings}, Ann. Univ. Mariae Curie-Sk\l odowska Sect. A {\bf 29} (1975), 73-77.

\bibitem {5} L. A. Karlovitz, {\it Existence of fixed points for nonexpansive map in a space without normal structure}, Pacific J. Math. {\bf 66} (1976), 153-159.

\bibitem {6}M. Kato, L. Maligranda and Y. Takahashi, {\it On James and Jordan-von Neumann constants and the normal structure coefficient of Banach spaces}, Studia Math. {\bf 144} (2001), 275-295.

\bibitem {7} P.-K. Lin, {\it Unconditional bases and fixed points of nonexpansive mappings}, Pacific J. Math. {\bf 116} (1985), 69-76.

\end{thebibliography}
\end{document}